\newtheorem{thm}{Theorem}[section]
\newtheorem{lem}{Lemma}[section]
\newtheorem{prop}{Proposition}[section]
\newtheorem{conj}{Conjecture}[section]
\newtheorem{prob}{Problem}[section]
\theoremstyle{definition}
\newtheorem{f}{Fact}[section]
\newtheorem{rem}{Remark}[section]
\numberwithin{equation}{section} \numberwithin{equation}{section}
\newcommand{\la}{\lambda}
\title{Unimodality of the independence
polynomials of some composite graphs
\thanks{Supported partially by the National Natural Science Foundation of China (Nos.
11201191, 11171150, 11171288).\newline\hspace*{5mm}
   {\it Email address:}\quad bxzhu@jsnu.edu.cn (B.-X. Zhu), qllu@jsnu.edu.cn (Q.-L. Lu)} }
\author{Bao-Xuan Zhu$^1$\footnote{Corresponding author.},\quad Qinglin
   Lu$^1$}
\date{\footnotesize 1. School of Mathematics and Statistics,
         Jiangsu Normal University,
         Xuzhou 221116, PR China}
\begin{document}
\maketitle
\begin{abstract}
Let  $I(G;x)$  denote the independence polynomial of a graph $G$. In
this paper we study the unimodality properties of $I(G;x)$ for some
composite graphs $G$.

Given two graphs $G_1$ and $G_2$, let $G_1[G_2]$ denote the
lexicographic product of $G_1$ and $G_2$. Assume
$I(G_1;x)=\sum_{i\geq0}a_ix^i$ and $I(G_2;x)=\sum_{i\geq0}b_ix^i$,
where $I(G_2;x)$ is log-concave. Then we prove (i) if $I(G_1;x)$ is
log-concave and $(a^2_i-a_{i-1}a_{i+1})b^2_1\geq a_ia_{i-1}b_2$ for
all $1\leq i \leq \alpha(G_1)$, then $I(G_1[G_2];x)$ is log-concave;
(ii) if $a_{i-1}\leq b_1a_i$ for $1\leq i\leq \alpha(G_1)$, then
$I(G_1[G_2];x)$ is unimodal. In particular, if $a_i$ is increasing
in $i$, then $I(G_1[G_2];x)$ is unimodal. We also give two
sufficient conditions when the independence polynomial of a complete
multipartite graph is unimodal or log-concave. Finally, for every
odd positive integer $\alpha > 3$, we find a connected graph $G$ not
a tree, such that $\alpha(G) =\alpha$, and $I(G; x)$ is symmetric
and has only real zeros. This answers a problem of Mandrescu and
Miric\u{a}.
\bigskip\\
{\sl Keywords:}\quad unimodality; log-concavity; independence
polynomials; complete multipartite graphs; rooted product of graphs
\bigskip\\
{\sl MSC:}\quad 05A20; 05A15; 05C31
\end{abstract}

%%%%%%%%%%%%%%%%%%%%%%%%%%%%%%%%%%%%%%%%
\section{Introduction}
%%%%%%%%%%%%%%%%%%%%%%%%%%%%%%%%%%%%%%%%
A graph polynomial is an algebraic object associated with a graph
that is usually invariant at least under graph isomorphism. As such,
it encodes information about the graph, and enables algebraic
methods for extracting this information. Graph polynomials are
widely studied, e.g., Tutte polynomial, chromatic polynomial,
matching polynomial, independence polynomial, and so on, which have
been found many applications in chemistry and physics.

 Let $G=(V(G),E(G))$
be a finite and simple graph. An {\it independent set} in a graph
$G$ is a set of pairwise non-adjacent vertices. A {\it maximum
independent set} in $G$ is a largest independent set and its size is
denoted by $\alpha(G)$. Let $i_k(G)$ denote the number of
independent sets of cardinality $k$ in $G$. Then its generating
function
$$I(G;x)=\sum\limits_{k=0}^{\alpha(G)}i_k(G)x^k,\quad i_0(G)=1$$
is called the {\it independence polynomial} of $G$ (Gutman and
Harary~\cite{GH83}). It is clear that $i_1(G)=|V(G)|$ and
$i_2(G)=\binom{|V(G)|}{2}-|E(G)|.$  For $v\in V(G)$, let $N(v)=\{w:
vw\in E(G)\}$ and $N[v]=N(v)\cup \{v\}$. The following is
fundamental:
 $$I(G;x)=I(G-v;x)+xI(G-N[v];x)$$ for arbitrary $v\in V(G)$, see \cite{GH83}.

A polynomial $\sum_{k=0}^na_kx^k$ with nonnegative coefficients is
called {\it unimodal} if there is some $m$, such that
$$a_0\le a_1\le\cdots\le a_{m-1}\le a_m\ge a_{m+1}\ge\cdots\ge a_n;$$
it is called {\it symmetric} if $a_k=a_{n-k}$ for $0\le k\le \lfloor
n/2\rfloor$; it is called {\it log-concave} if $a_k^2\ge
a_{k-1}a_{k+1}$ for all $1\le k \le n-1$; it is {\it strictly
log-concave} if $a_k^2> a_{k-1}a_{k+1}$ for all $1\le k \le n-1$. It
is known that a log-concave polynomial with positive coefficients is
unimodal. A basic approach to unimodality problems is to use
Newton's inequalities: Let $a_0,a_1,\ldots,a_n$ be a sequence of
nonnegative numbers. Suppose that the polynomial
$\sum_{k=0}^{n}a_kx^k$ has only real zeros. Then
$$a_k^2\ge a_{k-1}a_{k+1}\left(1+\frac{1}{k}\right)\left(1+\frac{1}{n-k}\right),\quad k=1,2,\ldots,n-1,$$
and the sequence is therefore log-concave and unimodal (see Hardy,
Littlewood and P\'olya~\cite[p. 104]{HLP52}). Unimodality problems
arise naturally in many branches of mathematics and have been
extensively investigated. See Stanley's survey~\cite{Sta89} and
Brenti's supplement~\cite{Bre94} for known results and open problems
on log-concavity and unimodality arising in algebra, combinatorics
and geometry.

Unimodality problems of independence polynomials have attracted
researchers' great interest, see
\cite{AMSE87,BHN04,BN05,CS07,LM05,LM06EJC,WZ10,Zhu13} for instance.
Alavi, Malde, Schwenk, Erd\H{o}s~\cite{AMSE87} found that
independence polynomials are not unimodal in general and conjectured
the following.

\begin{conj}\label{conj+tree}
The independence polynomial of any tree or forest is unimodal.
\end{conj}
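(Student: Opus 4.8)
The plan is to prove Conjecture~\ref{conj+tree} by induction on $|V(G)|$, using the fundamental recursion $I(G;x)=I(G-v;x)+xI(G-N[v];x)$. Since $I$ is multiplicative over disjoint unions, it essentially suffices to handle a single tree $T$ with at least one edge, although passing from trees to forests needs some care about how the modes of the factors combine (it is immediate, for instance, if the tree statement can be upgraded to log-concavity, since convolutions of log-concave positive sequences are log-concave). Take $v$ to be a leaf of $T$ with neighbour $u$; then $T-v$ and $T-N[v]=(T-v)-u$ are forests on fewer vertices, so by induction $I(T-v;x)=\sum_i a_ix^i$ and $I(T-N[v];x)=\sum_i b_ix^i$ are unimodal, and one wants to conclude that $I(T;x)=\sum_i(a_i+b_{i-1})x^i$ is unimodal.

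First I would exploit the structure linking the two summands. Because $T-N[v]=(T-v)-u$, we have $I(T-N[v];x)=I(T-v;x)-xI((T-v)-N[u];x)$, so $\{a_i\}$ and $\{b_i\}$ are tightly coupled rather than arbitrary unimodal sequences; and one is free to choose which leaf $v$ to delete. A well-chosen leaf --- e.g.\ one whose neighbour $u$ lies on a longest path of $T$ --- leaves a small, highly structured residual forest $(T-v)-N[u]$, often a disjoint union of paths and stars, so that the quantitative estimates obtained in this paper (in particular the sufficient conditions for complete multipartite graphs, which cover stars, and the inequalities behind the lexicographic-product results) can be applied to the pieces and fed back into the recursion, reducing a global claim about $T$ to several local, tractable ones.

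The main obstacle --- and the reason the conjecture has been open since \cite{AMSE87} --- is that unimodality is \emph{not} preserved by the map $\bigl(\sum_i a_ix^i,\ \sum_i b_ix^i\bigr)\mapsto\sum_i(a_i+b_{i-1})x^i$: adding a unimodal sequence to a shifted unimodal sequence can create a dip unless the two modes are placed compatibly. Hence the induction must carry a stronger invariant $P$ with three properties: $P$ implies unimodality, $P$ descends to $T-v$ and $T-N[v]$, and $P$ survives the shift-and-add. Plausible candidates are (a) log-concavity of $I(T;x)$, or of an initial segment of it, since log-concavity adds far more gracefully once supports are aligned, though one should first rule out small counterexamples; (b) an explicit localisation of the mode of $I(T;x)$ in terms of the order and diameter of $T$, together with monotonicity on each side and an arithmetic check that the window for $T-v$ and the shifted window for $T-N[v]$ overlap; (c) a ratio-monotonicity condition on the quotients $a_{i-1}/a_i$ in the spirit of the hypothesis $a_{i-1}\le b_1a_i$ from part~(ii) of the lexicographic-product result.

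I expect the selection of the right invariant $P$ to be the crux. Real-rootedness is unavailable as a shortcut in general: it holds for paths $P_n$ (via the matching polynomial of the line graph) but already fails for stars $K_{1,n}$ with $n\ge3$, whose independence polynomial $x+(1+x)^n$ is unimodal --- indeed log-concave --- yet not real-rooted, so Newton's inequalities cannot drive the argument. A successful proof must instead track a purely combinatorial monotonicity property robust enough to survive both of the vertex deletions and the degree shift simultaneously; pinning such a property down, or else exhibiting a non-unimodal forest, is the real heart of the problem.
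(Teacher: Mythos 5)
This statement is not a theorem of the paper at all: it is Conjecture~\ref{conj+tree} (the Alavi--Malde--Schwenk--Erd\H{o}s conjecture), which the paper explicitly records as still open, so there is no ``paper's own proof'' to compare against, and your text does not close the gap either. What you have written is a research plan, not a proof: the induction on $|V(G)|$ via a leaf $v$ and the recursion $I(T;x)=I(T-v;x)+xI(T-N[v];x)$ is the standard starting point, and you correctly identify the decisive obstruction yourself --- the shift-and-add map $\bigl(\{a_i\},\{b_i\}\bigr)\mapsto\{a_i+b_{i-1}\}$ does not preserve unimodality, so the induction needs a stronger invariant $P$ --- but you never exhibit such a $P$, let alone verify that it descends to $T-v$ and $T-N[v]$ and survives the recursion. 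Each of your candidates is unsubstantiated: (a) log-concavity of $I(T;x)$ for trees is itself an open strengthening of the same conjecture, so assuming it as the inductive invariant is circular until someone proves it; (b) no localisation of the mode in terms of order and diameter is established anywhere (in this paper or in the literature you cite), and the ``arithmetic check'' that the two windows overlap is precisely the hard part; (c) the ratio condition $a_{i-1}\le b_1 a_i$ from Theorem~\ref{prop}(ii) is a hypothesis imposed on the input polynomials there, not a property proved to hold for trees or to propagate through leaf deletion.

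The auxiliary appeals are also weaker than you suggest. The paper's results on complete multipartite graphs and on lexicographic and rooted products concern those specific compositions; a residual forest $(T-v)-N[u]$ being a union of paths and stars does not let you ``feed back'' those inequalities into the shift-and-add recursion, because the recursion mixes coefficients of two different subforests rather than multiplying or substituting polynomials as those theorems require. (Your side remark that $I(K_{1,n};x)=(1+x)^n+x$ is log-concave but not real-rooted for $n\ge 3$ is correct, and it rightly rules out Newton's inequalities as a shortcut, but it does not advance the construction of $P$.) So the honest assessment is: the approach is the natural one, the obstruction is correctly diagnosed, but the proof has not been carried out --- which is exactly why the statement remains a conjecture in the paper.
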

This conjecture is still open. In general, the independence
polynomial of a graph may be neither log-concave nor unimodal, as
evidenced by the graph $G=3K_{4}+K_{37}$ with
$I(G;x)=1+49x+48x^2+64x^3$. But the independence polynomials for
certain special classes of graphs are unimodal and even have only
real zeros. For instance, the independence polynomial of a line
graph has only real zeros \cite{HL72}. More generally, the
independence polynomial of a claw-free graph has only real
zeros~\cite{CS07}. Thus, a natural problem arises.

\begin{prob}\label{problem} Which special class of graphs have unimodal independence
polynomials ?
\end{prob}

Recently, by researching the operations on graphs, there has been
some partial results for Problem~\ref{problem}, see Bahls
\cite{B10}, Bahls and Salazar \cite{BS10}, Levit and Mandrescu
\cite{LM04WSEAS}, Mandrescu \cite{Man09}, Wang and Zhu \cite{WZ10}
and Zhu \cite{Zhu13} for instance. Motivated by Problem
\ref{problem}, we will give some products of graphs having unimodal
independence polynomials, including the rooted product of graphs and
lexicographic product of graphs. On the other hand, note that the
complete multipartite graphs are important and familiar. However,
there are fewer known results for the unimodality of their
independence polynomials. Therefore, we also study the unimodality
of independence polynomials of the complete multipartite graphs.

Recently, Mandrescu and Miric\u{a}~\cite{MM11} found for every
integer $2\leq \alpha\neq 3$ there is a forest $F$ consisting of at
most two non-trivial trees, whose $\alpha(F)=\alpha$, and $I(F; x)$
is symmetric and has only real zeros. They further proposed the
following problem.

\begin{prob}\label{pro} For every odd positive integer $\alpha > 3$, find a connected graph
$G$ different from a tree, such that $\alpha(G) =\alpha$, and $I(G;
x)$ is symmetric and has only real zeros.
\end{prob}
In this paper, we also answer this problem by finding a connected
bipartite graph.

%%%%%%%%%%%%%%%%%%%%%%%%%%%%%%%%%%%%%%%%%%%%%%%%%%%%%%%%%%%%%%%%%%%%%%%%

\section{Lexicographic product of graphs}
%%%%%%%%%%%%%%%%%%%%%%%%%%%%%%%%%
To simplify our proof, we need the next result, which is very useful
in solving unimodality problems for polynomials.
%%%%%%%%%%%%%%%%%%%%%%%%%%%%%%%%%%%%%%%%
%%%%% product
%%%%%%%%%%%%%%%%%%%%%%%%%%%%%%%%%%%%%%%%
\begin{lem}\emph{\cite{Sta89}}\label{product}
Let $f(x)$ and $g(x)$ be polynomials with positive coefficients.
\begin{itemize}
\item [\rm (i)] If both $f(x)$ and $g(x)$ are log-concave, then
so is their product $f(x)g(x)$.
\item [\rm (ii)] If $f(x)$ is log-concave and $g(x)$ is unimodal, then
their product $f(x)g(x)$ is unimodal.
\item [\rm (iii)] If both $f(x)$ and $g(x)$ have only real zeros, then
so does their product $f(x)g(x)$.
\end{itemize}
\end{lem}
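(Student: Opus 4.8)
The plan is to handle the three items in turn, with (i) carrying the real weight and (ii), (iii) reduced to it or to elementary facts. Part (iii) is immediate: a nonzero polynomial of degree $m$ with only real zeros equals $c\prod_{t=1}^{m}(x-\rho_t)$ with $c\neq 0$ and each $\rho_t\in\mathbb{R}$, so the product of two such factorizations is again a nonzero constant times real linear factors, whence $f(x)g(x)$ has only real zeros.

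For (i), write $f(x)=\sum_i a_ix^i$, $g(x)=\sum_j b_jx^j$, $f(x)g(x)=\sum_k c_kx^k$; then $(c_k)$ is the convolution $c_k=\sum_i a_ib_{k-i}$ of the two coefficient sequences, and I want the statement that the convolution of two positive log-concave sequences is log-concave. I would argue by total positivity. For a positive log-concave sequence $(a_i)$ the ratios $a_{i+1}/a_i$ are non-increasing, so $a_pa_q\ge a_{p-1}a_{q+1}$ whenever $p\le q$, and iterating this yields the spread inequality $a_ua_v\ge a_{u'}a_{v'}$ whenever $u'\le\min(u,v)\le\max(u,v)\le v'$ and $u+v=u'+v'$. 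This is exactly the assertion that the bi-infinite Toeplitz matrix $T(a)=(a_{i-j})_{i,j\in\mathbb{Z}}$, with $a_m:=0$ off the support, is totally positive of order $2$; likewise for $T(b)$. A short computation gives $T(a)T(b)=T(c)$, so by the Cauchy--Binet formula every $2\times 2$ minor of $T(c)$ is a sum of products of $2\times 2$ minors of $T(a)$ and of $T(b)$, hence nonnegative. Choosing rows $\{k,k+1\}$ and columns $\{0,1\}$ gives precisely $c_k^2-c_{k-1}c_{k+1}\ge 0$, which is (i).

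For (ii), I would reduce to (i) through the first-difference sequence. Let $g$ be unimodal with mode $M$, and put $\Delta b_j=b_j-b_{j-1}$, $\Delta c_k=c_k-c_{k-1}$; shifting the index in the convolution gives $\Delta c_k=\sum_i a_i\,\Delta b_{k-i}$. Since $\Delta b_j\ge 0$ for $j\le M$ and $\Delta b_j\le 0$ for $j>M$, split $\Delta c_k=A_k-B_k$ with $A_k=\sum_{i\ge k-M}a_i\,\Delta b_{k-i}\ge 0$ and $B_k=\sum_{i\le k-M-1}a_i\,|\Delta b_{k-i}|\ge 0$. When $k\le M$ the sum defining $B_k$ is empty, so $\Delta c_k\ge 0$; when $k$ exceeds $M+\deg f$ the sum defining $A_k$ is empty, so $\Delta c_k\le 0$. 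In the intermediate window, re-indexing shows that $A_{k+1}$ and $A_k$ (respectively $B_{k+1}$ and $B_k$) are the same nonnegatively weighted sums with every $a_i$ replaced by $a_{i+1}$; since $f$ is log-concave the ratios $a_{i+1}/a_i$ decrease, so comparing the extreme ratios over each index range yields $A_{k+1}/A_k\le a_{k-M+1}/a_{k-M}\le a_{k-M}/a_{k-M-1}\le B_{k+1}/B_k$, i.e. $A_{k+1}B_k\le A_kB_{k+1}$. Hence $\Delta c_k\le 0$ (that is, $A_k\le B_k$) forces $\Delta c_{k+1}\le 0$, so $(\Delta c_k)$ is first nonnegative and then nonpositive, which is unimodality of $(c_k)$.

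The main obstacle is (i): real-rootedness in (iii) factors trivially and unimodality in (ii) can be pushed onto (i), but log-concavity of a convolution is genuinely not a term-by-term matter. Indeed, the naive estimate $2b_pb_q\ge b_{p-1}b_{q+1}+b_{p+1}b_{q-1}$ can already fail for a log-concave sequence $(b_j)$, so the weights $a_i$ cannot be discarded, and the total-positivity / Cauchy--Binet viewpoint is what makes them cooperate. The remaining care is routine: in (i) the Cauchy--Binet step really concerns finite submatrices, which is legitimate because $f$ and $g$ have finite support; and in (ii) the hypothesis of positive coefficients means $a_0,\dots,a_{\deg f}$ are all positive, so the only boundary indices to watch are those where an $a_i$ leaves the support or a $\Delta b_j$ vanishes, and in each such case the offending term simply drops out while the displayed inequalities still apply.
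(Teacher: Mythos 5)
Your proof is correct, but note that the paper does not prove this lemma at all: it is quoted from Stanley's survey \cite{Sta89} as a known result, so there is no in-paper argument to compare against. Your treatment of (iii) is the standard factorization remark, and your route for (i) --- positive log-concave sequence $\Rightarrow$ the Toeplitz matrix $(a_{i-j})$ is totally positive of order $2$, convolution corresponds to the matrix product, Cauchy--Binet preserves nonnegativity of $2\times2$ minors --- is exactly the classical $\mathrm{PF}_2$ argument one finds behind Stanley's statement; the alternative, more pedestrian proof expands $c_k^2-c_{k-1}c_{k+1}$ as a sum of products of $2\times2$ coefficient minors, which is really the same Cauchy--Binet identity written out by hand, so you lose nothing and gain a cleaner bookkeeping. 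Two small points deserve explicit care, though both are fine in substance: the equivalence between log-concavity and the $\mathrm{TP}_2$ property of the Toeplitz matrix needs the ``no internal zeros'' condition, which here is guaranteed because \emph{all} coefficients of $f$ and $g$ are positive (your spread inequality, extended by declaring $a_m=0$ off the support, does cover the boundary minors); and in (ii) the ratio chain $A_{k+1}/A_k\le a_{k-M+1}/a_{k-M}\le a_{k-M}/a_{k-M-1}\le B_{k+1}/B_k$ should be stated multiplicatively (as $A_{k+1}\le rA_k$ and $B_{k+1}\ge r'B_k$ with $r\le r'$ by log-concavity at $k-M$) to avoid dividing by $A_k$ or $B_k$ when one of them vanishes, and one should fix $M$ as the \emph{last} mode of $g$ so that $\Delta b_j\ge0$ for $j\le M$ and $\Delta b_j\le0$ for $j>M$; with those phrasings your induction ``once $\Delta c_k\le0$, always $\Delta c_{k+1}\le0$'' is sound and yields unimodality.
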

%%%%%%%%%%%%%%%%%%%%%%%%%%%%%%%%%%%%%%%%
Recall the definition of lexicographic product of graphs. For two
graphs $G_1$ and $G_2$, let $G_1[G_2]$ be the graph with vertex set
$V(G_1)\times V(G_2)$ and such that a vertex $(a, x)$ is adjacent to
a vertex $(b, y)$ if and only if $a$ is adjacent to $b$ (in $G_1$)
or $a=b$ and $x$ is adjacent to $y$ (in $G_2$). The graph $G_1[G_2]$
is called the {\it lexicographic product} (or composition) of $G_1$
and $G_2$, and can be thought of as the graph arising from $G_1$ and
$G_2$ by substituting a copy of $G_2$ for every vertex of $G_1$. In
\cite{BHN04}, it was proved that
\begin{equation}\label{eq+lex}
I(G_1[G_2];x)=I(G_1;I(G_2;x)-1). \end{equation} Motivated by
(\ref{eq+lex}), we prove the following general result, which can be
well applied to the independence polynomial of the lexicographic
product of graphs. We refer readers to \cite{CYZ10,LMB10,WYeujc05}
for some similar results.
%%%%%%%%%%%%%%%%%%%%%%%%%%%
\begin{thm}\label{thm+com}
Let polynomials $f(x)=\sum_{i=0}^{n}a_ix^i$ and
$g(x)=\sum_{i=1}^{m}b_ix^i$ with positive coefficients be given.
\begin{itemize}
\item [\rm (i)]
Assume that both $f(x)$ and $g(x)$ are log-concave. If
$(a^2_i-a_{i-1}a_{i+1})b^2_1\geq a_ia_{i-1}b_2$ for all $1\leq i
\leq n$, then $f(g(x))$ is log-concave;
\item [\rm (ii)]
Assume that $g(x)$ is log-concave. If $a_{i-1}\leq b_1a_i$ for
$1\leq i\leq n$, then $f(g(x))$ is unimodal. In particular, if the
sequence $a_n$ is increasing in $n$ and $b_1\geq1$, then $f(g(x))$
is unimodal.
\end{itemize}
\end{thm}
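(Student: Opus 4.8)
The plan is to prove (i) and (ii) together by induction on $n=\deg f$, using the decomposition
\[
f(g(x))=\sum_{i=0}^{n}a_ig(x)^{i}=a_0+g(x)\,\widetilde f\big(g(x)\big),
\qquad
\widetilde f(x):=\sum_{i=0}^{n-1}a_{i+1}x^{i}=a_1+a_2x+\cdots+a_nx^{n-1}.
\]
Here $\widetilde f$ has positive coefficients, has degree $n-1$, and---this is the crucial point---inherits from $f$ exactly the hypotheses of whichever part we are proving, with the \emph{same} $g$: if $f$ is log-concave then so is $\widetilde f$; the inequalities $(a_i^{2}-a_{i-1}a_{i+1})b_1^{2}\ge a_ia_{i-1}b_2$ for $2\le i\le n$ become the full set of such inequalities for $\widetilde f$, and likewise $a_{i-1}\le b_1a_i$ for $2\le i\le n$ becomes the full set for $\widetilde f$. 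Note also that $g$ has no constant term.

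The base case $n=0$ is trivial, as $f(g(x))=a_0>0$. For the inductive step, apply the inductive hypothesis to $\widetilde f$ and $g$: in case~(i) this gives that $\widetilde f(g(x))$ is log-concave, and in case~(ii) that it is unimodal; in both cases it has \emph{positive} coefficients (since $g$ has all coefficients positive with $\deg g\ge 1$, each power $g^i$ has positive coefficients in degrees $i,\dots,mi$, and a positive combination of these powers together with the constant $a_1$ leaves no gaps). Now invoke Lemma~\ref{product} with $g$ as the log-concave factor: by part~(i) of that lemma, $G(x):=g(x)\,\widetilde f(g(x))$ is log-concave in case~(i); by part~(ii), $G$ is unimodal in case~(ii). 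Since $g(0)=0$, $G$ has zero constant term, so writing $G(x)=\sum_{j\ge1}c_jx^{j}$ we get $f(g(x))=a_0+G(x)$, a polynomial whose coefficient sequence is $a_0,c_1,c_2,\dots$.

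It remains to pass from $G$ to $a_0+G$. Since $g=b_1x+b_2x^{2}+\cdots$ and $\widetilde f(g(x))=a_1+a_2b_1x+\cdots$, a short computation gives $c_1=a_1b_1$ and $c_2=a_2b_1^{2}+a_1b_2$. In case~(ii) the sequence $a_0,c_1,c_2,\dots$ is unimodal as soon as $a_0\le c_1$, and $a_0\le b_1a_1=c_1$ is precisely the hypothesis at $i=1$; this proves~(ii), and the final assertion is immediate since $a_0\le a_1\le\cdots\le a_n$ and $b_1\ge1$ give $a_{i-1}\le a_i\le b_1a_i$. In case~(i), log-concavity of $G$ already supplies $c_j^{2}\ge c_{j-1}c_{j+1}$ for all $j\ge2$, so $a_0,c_1,c_2,\dots$ is log-concave as soon as $c_1^{2}\ge a_0c_2$, which upon substituting the values of $c_1,c_2$ reads $(a_1^{2}-a_0a_2)b_1^{2}\ge a_0a_1b_2$---again the hypothesis at $i=1$. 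This closes the induction.

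I do not expect a genuine obstacle here: the whole argument is driven by the decomposition above, and the two features that make it go through are that $g$ has no constant term (so multiplication by $g$ shifts degrees and keeps the low coefficients $c_1,c_2$ controlled by $a_1,a_2,b_1,b_2$ alone), and that Lemma~\ref{product}(ii) is exactly strong enough to propagate the weaker, merely-unimodal conclusion of part~(ii) through the induction. The only points needing care are the routine bookkeeping that the hypotheses genuinely restrict to $\widetilde f$, and the verification that all coefficients stay positive so that Lemma~\ref{product} applies.
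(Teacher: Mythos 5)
Your proof is correct and follows essentially the same route as the paper: the identical decomposition $f(g(x))=a_0+g(x)\sum_{i=0}^{n-1}a_{i+1}g(x)^i$, induction on $n$ with Lemma~\ref{product} handling the product, and the hypothesis at $i=1$ used to patch in the constant term via $c_1=a_1b_1$, $c_2=a_2b_1^2+a_1b_2$. The only (immaterial) difference is that you start the induction at $n=0$ while the paper treats $n=1$ explicitly.
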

\begin{proof}
Let $f(g(x))=\sum_{i=0}^{mn}c_ix^i$.

(i) Note that it is trivial for $n=0$. In the following, we will
prove (i) by induction on $n$. If $n=1$, then
$$f(g(x))=a_0+a_1b_1x+a_1b_2x^2+\ldots+a_1b_mx^m.$$
By the hypothesis, its log-concavity follows from $a^2_{1}b^2_1\geq
a_{1}a_0b_2$. So we proceed to the inductive step.

Let $F(x)=\sum_{i=0}^{n-1}a_{i+1}x^i$. Then
$f(g(x))=a_0+g(x)F(g(x))$. By the induction hypothesis, $F(g(x))$ is
log-concave. So $g(x)F(g(x))$ is log-concave by Lemma~\ref{product}
(i). Thus $c_1,c_2,\ldots,c_{mn}$ is log-concave. To show the
log-concavity of $f(g(x))$, it suffices to check $c_1^2\geq c_0c_2$,
which follows from the hypothesis since $c_0=a_0$, $c_1=a_1b_1$ and
$c_2=a_1b_2+a_2b_1^2$.

(ii) Similarly, we will prove (ii) by induction on $n$. If $n=1$,
then
$$f(g(x))=a_0+a_1b_1x+a_1b_2x^2+\ldots+a_1b_mx^m.$$ Since $g(x)$ is
log-concave, we have $b_1,b_2,\ldots,b_m$ is unimodal. Thus, it
follows from $a_{1}b_1\geq a_0$ that $f(g(x))$ is unimodal. So we
proceed to the inductive step.

Let $F(x)=\sum_{i=0}^{n-1}a_{i+1}x^i$. Then
$f(g(x))=a_0+g(x)F(g(x))$. By the induction hypothesis, $F(g(x))$ is
unimodal. So $g(x)F(g(x))$ is unimodal by Lemma~\ref{product} (ii).
Thus $c_1,c_2,\ldots,c_{mn}$ is unimodal. To show the unimodality of
$f(g(x))$, it suffices to check $c_1\geq c_0$, which follows from
the hypothesis since $c_0=a_0$ and $c_1=a_1b_1$.

This completes the proof.
\end{proof}

 By Theorem~\ref{thm+com} and (\ref{eq+lex}),
we have the next result for the independence polynomial of graphs.
%%%%%%%%%%%%%%%%%%%%%%%%%%%%%%%%%%%%%%%%%%%%%%%%%%%%%%%%%%%%%%%%%%%%%%%%%%%%
\begin{thm}\label{prop}
For two vertex disjoint graphs $G_1$ and $G_2$, let
$I(G_1;x)=\sum_{i=0}^{\alpha(G_1)}a_ix^i$ and
$I(G_2;x)=\sum_{i=0}^{\alpha(G_2)}b_ix^i$.
\begin{itemize}
\item [\rm (i)]
Assume that $I(G_1;x)$ and $I(G_2;x)$ are log-concave. If
$(a^2_i-a_{i-1}a_{i+1})b^2_1\geq a_ia_{i-1}b_2$ for all $1\leq i
\leq \alpha(G_1)$, then $I(G_1[G_2];x)$ is log-concave;
\item [\rm (ii)]
Assume that $I(G_2;x)$ is log-concave. If $a_{i-1}\leq b_1a_i$ for
$1\leq i\leq \alpha(G_1)$, then $I(G_1[G_2];x)$ is unimodal. In
particular, if $a_i$ is increasing in $i$, then $I(G_1[G_2];x)$ is
unimodal.
\end{itemize}
\end{thm}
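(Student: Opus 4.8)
The plan is to obtain Theorem~\ref{prop} as a direct corollary of Theorem~\ref{thm+com} together with the identity~(\ref{eq+lex}). The key observation is that~(\ref{eq+lex}) tells us $I(G_1[G_2];x) = I(G_1; I(G_2;x)-1)$, so if we set $f(x) = I(G_1;x) = \sum_{i=0}^{\alpha(G_1)} a_i x^i$ and let $g(x) = I(G_2;x)-1 = \sum_{i=1}^{\alpha(G_2)} b_i x^i$ (note the constant term $b_0 = i_0(G_2) = 1$ is subtracted off, so $g$ has no constant term and positive coefficients $b_1, b_2, \ldots$, exactly matching the hypotheses of Theorem~\ref{thm+com}), then $I(G_1[G_2];x) = f(g(x))$. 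Both $f$ and $g$ have positive coefficients since $i_k > 0$ for $0 \le k \le \alpha$.

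For part (i): I would assume $I(G_1;x)$ and $I(G_2;x)$ are log-concave. Log-concavity of $g(x) = I(G_2;x)-1$ is inherited from that of $I(G_2;x)$ (deleting the constant term of a log-concave polynomial with positive coefficients leaves a log-concave polynomial, since the relevant inequalities $b_k^2 \ge b_{k-1}b_{k+1}$ for $k \ge 2$ are a subset of those for $I(G_2;x)$, and $b_1^2 \ge b_0 b_2 = b_2$ is not even needed). Then the hypothesis $(a_i^2 - a_{i-1}a_{i+1})b_1^2 \ge a_i a_{i-1} b_2$ for $1 \le i \le \alpha(G_1)$ is precisely the condition in Theorem~\ref{thm+com}(i) with $n = \alpha(G_1)$, so Theorem~\ref{thm+com}(i) gives that $f(g(x)) = I(G_1[G_2];x)$ is log-concave.

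For part (ii): I would assume only $I(G_2;x)$ is log-concave, which as above gives $g(x) = I(G_2;x)-1$ log-concave. The hypothesis $a_{i-1} \le b_1 a_i$ for $1 \le i \le \alpha(G_1)$ is exactly the hypothesis of Theorem~\ref{thm+com}(ii), so that theorem yields unimodality of $f(g(x)) = I(G_1[G_2];x)$. For the ``in particular'' clause, if $a_i$ is increasing in $i$ then $a_{i-1} \le a_i \le b_1 a_i$ since $b_1 = i_1(G_2) = |V(G_2)| \ge 1$ (as $G_2$ is a nonempty graph), so the hypothesis is automatically met and unimodality follows.

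I do not expect any genuine obstacle here: the entire content of Theorem~\ref{prop} is already packaged in Theorem~\ref{thm+com}, and the only points requiring a word of care are (a) the bookkeeping that subtracting the constant term $1$ from $I(G_2;x)$ preserves log-concavity and produces a polynomial of the form required by Theorem~\ref{thm+com} (positive coefficients, no constant term), and (b) noting $b_1 = |V(G_2)| \ge 1$ to justify the final ``in particular'' statement. Both are routine, so the proof will be short.
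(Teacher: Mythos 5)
Your proposal is correct and follows exactly the paper's route: the paper derives Theorem~\ref{prop} directly from Theorem~\ref{thm+com} via the identity $I(G_1[G_2];x)=I(G_1;I(G_2;x)-1)$, with $f=I(G_1;x)$ and $g=I(G_2;x)-1$. The small bookkeeping points you spell out (log-concavity surviving removal of the constant term, and $b_1=|V(G_2)|\ge 1$ for the ``in particular'' clause) are exactly the details the paper leaves implicit.
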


%%%%%%%%%%%%%%%%%%%%%%%%%
\begin{rem}
Let $|V(G_2)|=p$ and $|E(G_2)|=q$. Then we know that $b_1=p$ and
$b_2=\binom{p}{2}-q$. If $\frac{p^2}{\binom{p}{2}-q}$ is enough
large and $I(G_1;x)$ is strictly log-concave, then we can obtain
$(a^2_i-a_{i-1}a_{i+1})b^2_1\geq a_ia_{i-1}b_2$ for all $1\leq i
\leq \alpha(G_1)$. Thus, $I(G_1[G_2];x)$ is log-concave when
$I(G_2;x)$ is log-concave. On the other hand, if $p$ is
sufficiently  large, then we can obtain $a_{i-1}\leq b_1a_i$ for
$1\leq i\leq \alpha(G_1)$. Thus, $I(G_1[G_2];x)$ is unimodal when
$I(G_2;x)$ is log-concave.
\end{rem}
\begin{rem}\label{rem}
Let $G_1=G[K_p]$. If $p$ is sufficiently  large, then $I(G_1;x)$ is
nondecreasing. Thus, $I(G_1[G_2];x)$ is unimodal when $I(G_2;x)$ is
log-concave and $|V(G_2)|$ is sufficiently large.
\end{rem}
\begin{rem}
In the above results, the condition of the log-concavity can be
easily obtained if its independence polynomial has only real zeros
(for instance, for any claw-free graph).
\end{rem}

A graph is called well-covered if all its maximal independent sets
are of the same cardinality \cite{Pl70}. If graphs $G_1$ and $G_2$
are well covered, then so is $G_1[G_2]$, see \cite{BHN04}. Note that
it was proved for a well-covered graph that
$$i_{k-1}(G)\leq ki_k(G)$$
for $1\leq k\leq \alpha(G)$~\cite{BDN00}. Thus, by
Theorem~\ref{prop} (ii), we deduce the following.

\begin{prop}\label{prop+well}
Let $G_1$ and $G_2$ be two well-covered graphs. If $I(G_2;x)$ is
log-concave and $|V(G_2)|\geq \alpha(G_1)$, then $I(G_1[G_2];x)$ is
unimodal. In particular, if $I(G_2;x)$ is log-concave, then
$I(G_2[G_2];x)$ is unimodal.
\end{prop}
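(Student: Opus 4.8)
The plan is to derive Proposition~\ref{prop+well} as a direct specialization of Theorem~\ref{prop}~(ii), so the work is essentially checking that the hypotheses of that theorem are met. First I would recall the two facts quoted just before the statement: (a) the lexicographic product of two well-covered graphs is again well-covered (from \cite{BHN04}), and (b) for any well-covered graph $G$ one has $i_{k-1}(G)\leq k\,i_k(G)$ for $1\leq k\leq\alpha(G)$ (from \cite{BDN00}). Fact (b) applied to $G_1$ with $k=1$ gives in particular $i_0(G_1)\leq i_1(G_1)=|V(G_1)|$, but more importantly it controls the whole coefficient sequence of $I(G_1;x)$.

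Next I would translate the notation of Theorem~\ref{prop}: write $I(G_1;x)=\sum_i a_ix^i$ and $I(G_2;x)=\sum_i b_ix^i$, so $a_i=i_i(G_1)$ and $b_1=i_1(G_2)=|V(G_2)|$. The hypothesis to verify is $a_{i-1}\leq b_1 a_i$ for $1\leq i\leq\alpha(G_1)$. By fact (b) applied to $G_1$ we have $a_{i-1}=i_{i-1}(G_1)\leq i\,i_i(G_1)=i\,a_i\leq\alpha(G_1)\,a_i$, since $i\leq\alpha(G_1)$ in the relevant range. Because we are assuming $|V(G_2)|\geq\alpha(G_1)$, i.e.\ $b_1\geq\alpha(G_1)$, this yields $a_{i-1}\leq\alpha(G_1)\,a_i\leq b_1 a_i$, which is exactly the required inequality. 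Since $I(G_2;x)$ is assumed log-concave, Theorem~\ref{prop}~(ii) now applies and gives that $I(G_1[G_2];x)$ is unimodal.

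For the ``in particular'' clause, take $G_1=G_2$. Then trivially $|V(G_2)|=|V(G_1)|=i_1(G_1)\geq\alpha(G_1)$, because the number of vertices of any graph is at least the size of any independent set; so the hypothesis $|V(G_2)|\geq\alpha(G_1)$ is automatic, and $I(G_2[G_2];x)$ is unimodal whenever $I(G_2;x)$ is log-concave. I would close by remarking that all coefficients in sight are positive (so ``unimodal'' is meaningful and the log-concavity$\Rightarrow$unimodality passage used inside Theorem~\ref{prop} is valid).

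There is essentially no serious obstacle here: the proposition is a packaging result, and the only thing to be careful about is the direction of the inequality $i\leq\alpha(G_1)$ versus the range of the index, and making sure the well-coveredness of $G_1[G_2]$ (needed only implicitly, via the fact that the conclusion is about a genuine independence polynomial) is cited rather than reproved. If anything, the mild subtlety is that fact (b) is stated for $1\leq k\leq\alpha(G)$, so one should note that the bound $a_{i-1}\leq i\,a_i$ is being used precisely on that range and no boundary case is left uncovered.
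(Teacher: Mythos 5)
Your proof is correct and follows exactly the paper's route: the paper deduces the proposition from the well-covered inequality $i_{k-1}(G)\leq k\,i_k(G)$ of \cite{BDN00} together with Theorem~\ref{prop}~(ii), just as you do. The only cosmetic difference is that you spell out the chain $a_{i-1}\leq i\,a_i\leq\alpha(G_1)a_i\leq b_1a_i$ and the trivial bound $|V(G_2)|\geq\alpha(G_2)$ for the ``in particular'' clause, which the paper leaves implicit.
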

\begin{rem}
Noting that for any graph $G$, the rooted product $G\overline{\circ}
P_2$ of $G$ and $P_2$ (denote the path with two vertices) is a well
covered graph with $\alpha(G\overline{\circ} P_2)=|V(G)|$. So if $G$
is claw-free, $I(G\overline{\circ} P_2;x)$ has only real zeros since
 $I(G;x)$ has only real zeros, see Levit and Mandrescu
\cite{LM07}. Thus, let $G'=G\overline{\circ} P_2$, and by the above
Proposition \ref{prop+well} we get that $I(G'[G'];x)$ is unimodal.
Similarly, we can obtain more results.
\end{rem}

For the unimodality of independence polynomials of well-covered
graphs, we refer readers to \cite{BDN00,LM06EJC,LM071,LM07,MT03} for
details.

\section{Complete Multipartite Graphs}
%%%%%%%%%%%%%%%%%%%%%%%%%%%%%%%%%%%%
Denote the complete $k$-partite graph by $K_{n_1,n_2,\ldots,n_k}$.
Then its independence polynomial is
\begin{equation*}
I(K_{n_1,n_2,\ldots,n_k};x)=\sum_{i=1}^{k}(1+x)^{n_i}-(k-1).
\end{equation*}
So if $K_{n_1,n_2,\ldots,n_k}$ has $a_i$ classes of size $i$ for
each $1\leq i\leq n$, then
\begin{equation}\label{comp}
I(K_{n_1,n_2,\ldots,n_k};x)=\sum_{i=1}^{n}a_i(1+x)^{i}-(k-1).\end{equation}
Note that unimodality or log-concavity of
$\sum_{i=1}^{n}a_i(1+x)^{i}$ implies that of
$I(K_{n_1,n_2,\ldots,n_k};x)$. If $k=2$ and $n_1\geq n_2$, then it
is easy to obtain that $(1+x)^{n_1}[(1+x)^{n_2-n_1}+1]$ is
log-concave by Lemma~\ref{product} (i). It follows that
$I(K_{n_1,n_2};x)$ is log-concave. In general, we have the following
result.
%%%%%%%%%%%%%%%%%%%%%%%%%%%%
\begin{thm}
Assume that $G$ is a complete $k$-partite graph of order $n$ and
$k\geq 3$ and its independence polynomial satisfies (\ref{comp}).
\begin{itemize}
\item [\rm (i)]
If the sequence $\{a_i\}$ is positive and log-concave, then $I(G;x)$
is log-concave;
\item [\rm (ii)]
If the subsequence $\{a_i:a_i\neq0\}$ is increasing, then $I(G;x)$
is unimodal.
\end{itemize}
\end{thm}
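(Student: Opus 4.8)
The plan is to reduce everything to the already-proven results by expanding the relevant polynomial in powers of $x$ rather than in powers of $(1+x)$. Write $P(x)=\sum_{i=1}^{n}a_i(1+x)^i$, so that $I(G;x)=P(x)-(k-1)$; since $P(0)=\sum a_i=k$, the constant term of $I(G;x)$ is $1$ as it should be, and all higher coefficients of $I(G;x)$ agree with those of $P(x)$. Hence unimodality (resp. log-concavity) of $I(G;x)$ will follow from the corresponding property of $P(x)$, provided in the log-concave case one also checks the single inequality at the bottom, namely $[x^1]P(x)^2 \ge [x^0]I(G;x)\cdot[x^2]P(x)$, i.e. that the first three coefficients of $I(G;x)$ themselves satisfy the log-concavity inequality; I would verify this at the end as a short separate computation using $[x^0]I(G;x)=1$, $[x^1]=\sum_i i a_i=n$, $[x^2]=\sum_i \binom{i}{2}a_i$.

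For part (ii), the idea is to view $P(x)=\sum_i a_i (1+x)^i$ as a ``composition-type'' sum and argue by the same kind of induction used in Theorem~\ref{thm+com}(ii). Concretely, I would peel off the top term: if $a_n$ is the largest index with $a_n\neq 0$, write $P(x) = a_n(1+x)^n + Q(x)$ where $Q$ collects the lower terms; $(1+x)^n$ is log-concave (indeed has only real zeros), $Q(x)$ is unimodal by the inductive hypothesis applied to the shorter graph, and Lemma~\ref{product}(ii) would give unimodality of the product — but here we have a sum, not a product, so that exact route needs adjustment. The cleaner approach is to mimic the proof of Theorem~\ref{thm+com}: set $y = (1+x)$ and note $P(x) = y\cdot R(y)$ where $R(y)=\sum_{i\ge 1} a_i y^{i-1}$, then $P(x)=(1+x)R(1+x)$. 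Since $R$ has nonnegative coefficients and $(1+x)$ has only real zeros, it suffices to show $R(1+x)$ is unimodal. Now $R(1+x)=\sum_{j\ge 0} a_{j+1}(1+x)^j$ has the same structure with the subsequence of nonzero $a_i$'s still increasing, so induction on the number of nonzero terms applies, with the base case $R$ a monomial times a power of $(1+x)$, which is trivially unimodal (it has only real zeros). Then $P(x)=(1+x)\cdot R(1+x)$ is unimodal by Lemma~\ref{product}(ii), and subtracting the constant $k-1$ leaves unimodality intact because it only changes the constant coefficient downward to $1$, which cannot create a new internal valley when the remaining coefficients $i_1,\ldots$ were already unimodal with $i_1=n\ge 1$.

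For part (i), the analogous plan is: show $P(x)=\sum_i a_i(1+x)^i$ is log-concave whenever $\{a_i\}$ is positive and log-concave. Again factor a monomial: if the smallest nonzero index is $1$ (which we may assume after dividing by $(1+x)^{\min}$, since $(1+x)^t$ is log-concave and products of log-concave polynomials are log-concave by Lemma~\ref{product}(i)), write $P = (1+x)\sum_{i\ge 1}a_i(1+x)^{i-1}$; the inner sum has coefficient sequence $a_1,a_2,\ldots$ which is still positive and log-concave, so by induction on the number of terms it is log-concave, and then $P$ is log-concave as a product of two log-concave polynomials. The base case is a single term $a_j(1+x)^j$, log-concave since $(1+x)^j$ has only real zeros. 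Finally, passing from $P$ to $I(G;x)=P-(k-1)$ requires re-checking only the $k=1$ inequality $i_1^2\ge i_0 i_2$, i.e. $n^2 \ge \sum_i \binom{i}{2}a_i$ — wait, this is $i_2$, so we need $n^2\ge\sum_i\binom{i}{2}a_i\cdot 1$ with $i_0=1$; I expect this to follow from $k\ge 3$ and $\sum a_i = k$ by a convexity/Cauchy–Schwarz estimate, and that inequality check is the one place the hypothesis $k\ge 3$ genuinely enters.

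The main obstacle is precisely this last step: subtracting the constant $k-1$ can in principle destroy log-concavity at the very first inequality (it does not affect any inequality with index $\ge 2$, and cannot destroy unimodality since lowering the constant term only deepens an already-existing initial ascent), so part (i) really does need the separate verification that $i_1^2\ge i_0 i_2$ with the true constant term $i_0=1$. I would isolate this as a lemma-style computation: with $i_0=1$, $i_1=\sum_i i a_i=n$, $i_2=\sum_i\binom{i}{2}a_i$, the claim $n^2\ge i_2$ should reduce, via $n=\sum i a_i$ and $\sum a_i=k\ge 3$, to showing $\left(\sum_i i a_i\right)^2 \ge \tfrac12\left(\sum_i i^2 a_i-\sum_i i a_i\right)$, which holds comfortably once $\sum a_i\ge 2$; the $k\ge 3$ hypothesis gives even more room. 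Everything else is a routine induction riding on Lemma~\ref{product} and Lemma~\ref{product}(i)/(iii).
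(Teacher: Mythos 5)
The reduction from $I(G;x)$ to $P(x)=\sum_{i\ge 1}a_i(1+x)^i$ is the easy part; the real content of both statements is a fact about polynomials of the form $\sum_i d_i(1+x)^i$, and that is exactly where your argument has a hole: neither of your inductions actually reduces. Rewriting $\sum_{i\ge 1}a_i(1+x)^i$ as $(1+x)\sum_{i\ge 1}a_i(1+x)^{i-1}$ leaves all of $a_1,\dots,a_n$ in place, so ``induction on the number of (nonzero) terms'' never gets smaller; to make it decrease you must peel off the new constant term and later add it back, and that re-addition is precisely the hard step. Tellingly, as written your inductions never invoke the hypotheses in any inequality --- neither log-concavity of $\{a_i\}$ in (i) nor the increasing condition in (ii) --- so the same argument would ``prove'' the conclusions for arbitrary positive (resp.\ nonnegative) $\{a_i\}$, which is false: the paper's own example $26(1+x)+(1+x)^8-26$ is not unimodal, and $100(1+x)+(1+x)^2+(1+x)^3=102+105x+4x^2+x^3$ is not log-concave (here $4^2<105$). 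The paper handles this crux by (i) quoting Hoggar's theorem \cite{Hog74} that $\{d_i\}\mapsto\sum_i d_i(1+x)^i$ preserves log-concavity of positive sequences --- a genuinely nontrivial fact not obtainable from Lemma~\ref{product} alone --- and (ii) proving Fact~\ref{ef} by an induction whose step is $f_n(x)=d_0+(1+x)F(1+x)$: the product lemma controls $c_1,\dots,c_n$, and the increasing hypothesis enters exactly in the extra check $c_0=\sum_i d_i<\sum_i i\,d_i=c_1$ that your sketch omits. For log-concavity no such cheap check exists when a constant is added back (a constant plus a log-concave polynomial need not be log-concave), which is why (i) needs Hoggar-type input, or an extra condition as in Theorem~\ref{thm+com}(i), rather than a naive peeling induction.

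A smaller point: your worry that subtracting $k-1$ might destroy log-concavity is backwards. Only $c_1^2\ge c_0c_2$ involves the constant term, and lowering $c_0$ from $k=\sum_i a_i$ to $1$ only weakens that inequality; likewise unimodality survives because $c_1=n\ge 1$. So the separate Cauchy--Schwarz-style verification of $n^2\ge\sum_i\binom{i}{2}a_i$ is unnecessary, and $k\ge 3$ plays no role there; the unproved facts about $\sum_i d_i(1+x)^i$ above are what actually need to be supplied.
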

\begin{proof}
(i) directly follows from the result that if a positive sequence
$\{d_i\}_{i=0}^n$ is log-concave then so is the polynomial $\sum_{i=
0}^n d_i (1+x)^i$ \cite{Hog74}. (ii) follows from the next fact.

\begin{f}\label{ef}
Given a nonnegative sequence $\{d_i\}_{i=0}^n$, if the subsequence
$\{d_i:d_i\neq0\}$ is increasing, then the polynomial $\sum_{i= 0}^n
d_i (1+x)^i$ is unimodal.
\end{f}
\textbf{The proof of Fact \ref{ef}:} Let
$f_n(x)=\sum_{i=0}^{n}d_i(1+x)^{i}=\sum_{i=0}^{n}c_ix^i$. Since the
subsequence $\{d_i:d_i\neq0\}$ is increasing, we can assume
$d_n\neq0$. We will show this fact by induction on $n$. If $n=1$,
then it is trivial since $f_1(x)=d_0+d_1(x+1)=d_1x+(d_0+d_1)$. So we
proceed to the inductive steps ($n\geq2$).

Let $F(x)=\sum_{i=0}^{n-1}d_{i+1}x^i$. Then
\begin{eqnarray}\label{f}
f_n(x)=d_0+(1+x)F(1+x).
\end{eqnarray} By the
induction hypothesis, $F(1+x)$ is unimodal. So $(1+x)F(1+x)$ is
unimodal by Lemma~\ref{product} (ii). Thus $c_1,c_2,\ldots,c_{n}$ is
unimodal. On the other hand, note that
$$c_0=\sum_{i=0}^nd_i<\sum_{i=1}^nid_i=c_1$$ since the subsequence $\{d_i:d_i\neq0\}$ is increasing.
It follows that $c_0,c_1,c_2,\ldots,c_{n}$ is still unimodal, i.e.,
$f_n(x)$ is unimodal. This completes the proof.
\end{proof}
\begin{rem}
In fact, our Fact \ref{ef} generalizes the following result of Boros
and Moll~\cite{BM99}: If $P(x)$ is a polynomial with positive
nondecreasing coefficients, then $P(x+1)$ is unimodal.
\end{rem}
\begin{rem}
If the subsequence $\{a_i:a_i\neq0\}$ is not increasing, then
$I(G;x)$ may not be unimodal. For
instance:$$I(K_{{\underbrace{1,\ldots,1}_{26}},8};x)=26(x+1)+(x+1)^8-26=1+34x+\textbf{28}x^2+56x^3+70x^4+56x^5+28x^6+8x^7+x^8$$
is not unimodal.
\end{rem}
%%%%%%%%%%%%%%%%%%%%%%%%%%%%%%%%%%%%%%%%%%%%%%%%%%%%%%%%%%%%%%%%%%%%%
\section{Rooted Product of Graphs}
%%%%%%%%%%%%%%%%%%%%%%%%%%%%%%%%%%%%%%%%%%%%%%%%%%%%%%%%%%%%%%%%%%%5
Let $V(G)=\{v_i\}_{i=1}^n$ and $H$ be a rooted graph with the root
$u$. The rooted product $G\overline{\circ} H$ of the graphs $G$ and
$H$ with respect to the ``root'' $u$ is defined as follows: take $n$
copies of $H$, and for every vertex $v_i$ of $G$, identify $v_i$
with the root $u$ of the $i$th copy of $H$, see Godsil and
MacKay~\cite{GM78} for instance.

%%%%%%%%%%%%%%%%%%%%%%%%%%%%%%%%%%%%%%%%
%%%%%%%%%%%%%%%%%%%%%%%%%%%%%%%%%%%%%%%
%%%%% Fig 2
%%%%%%%%%%%%%%%%%%%%%%%%%%%%%%%%%%%%%%%%
\begin{center}
\setlength{\unitlength}{1cm}
\begin{picture}(20,2.5)(-1,-2.0)
%%%%% horizontal line

\thicklines\put(5,0){\line(0,-1){1}}
%%%%% horizontal black vetex

%%%%%%%%%%%%%%%% horizontal v_i vetex
  \put(5.0,0.3){$v$}\put(5.0,0.0){\circle*{0.2}}
%%%%%%%%%%%%%%%% diagonal  line 1 , black vetex

\put(5,-1){\circle*{0.2}}
%%%%%%%%%%%%%%%%55
\put(4.7,-2.0){ $P_2$}
%%%%%%%%%%%%%%%%%%%%%%%%%%%%%%%%%%%%%%%%%%%%%%%%%%%%%%%%%%%%%%%%%%%%%%%%%%%%%%%%%
%%%%% horizontal line

%%%%% horizontal black vetex
\put(9,0){\circle*{0.2}}

%%%%%%%%%%%%%%%% horizontal v_i vetex
 \put(9.0,0.2){$v$}
%%%%%%% diagonal  line 1 , black vetex
 \put(9,0){\line(1,-1){0.7}}\put(9,0){\line(-1,-1){0.7}}

\put(8.3,-0.7){\circle*{0.2}}\put(9.7,-0.7){\circle*{0.2}}
%%%%%%%%%%%55
\put(8.7,-2.0){ $P_3$}

%%%%%%%%%%%%%%%%%%%%%%%%%%%%%%%%%%%%%%%%%%%%%%%%%%
\end{picture}
Figure~1.
\end{center}

Let $P_2$ and $P_3$ with the root $v$, respectively, see Figure~1.
For a graph $G$, if $I(G;x)$ has only real zeros, then so do
$I(G\overline{\circ} P_2;x)$ and $I(G\overline{\circ} P_3;x)$, see
Levit and Mandrescu \cite{LM07} and Mandrescu~\cite{Man09},
respectively. More generally, let $H$ be a claw-free graph with the
root $v$. If $I(G;x)$ has only real zeros, then so does
$I(G\overline{\circ} H;x)$, see Zhu \cite[Proposition 3.3]{Zhu13}.
Thus, naturally, it should be considered the graphs with claws. If
$H$ has claws, then we give the following special result.

%%%%%%%%%%%%%%%%%%%%%%%%%%%%%%%%%%%%%%%%
%%%%% Fig 2
%%%%%%%%%%%%%%%%%%%%%%%%%%%%%%%%%%%%%%%%
\begin{center}
\setlength{\unitlength}{1cm}
\begin{picture}(20,2.5)(-7,-1)

\thicklines \put(-3,0){\line(1,0){2}}

%%%%%%%%%%%%%%%%%%%%%%%%%%%%%%%%%%%%%%%%
\put(-3,0){\circle*{0.2}}\put(-2,0){\circle*{0.2}}
\put(-1,0){\circle*{0.2}}

 \put(-3,0){\line(0,1){1}} \put(-3,0){\line(0,-1){1}}

%%%%%%%%%%%%%%%%%%%%%%%%%%%%%%%%%%%%%%%%%%%%%%%%%%%%%%%%%%%%%
 \put(-3,-1){\circle*{0.2}}\put(-3,1){\circle*{0.2}}
 \put(-1.1,-0.5){4}\put(-2.1,-0.5){2}\put(-2.9,-0.5){3}\put(-2.9,0.6){1}
\put(-2,-1.5){ $T$}

\thicklines \put(3,0){\line(1,0){3}}

%%%%%%%%%%%%%%%%%%%%%%%%%%%%%%%%%%%%%%%%
\put(3,0){\circle*{0.2}}\put(4,0){\circle*{0.2}}
\put(5,0){\circle*{0.2}}\put(6,0){\circle*{0.2}}

 \put(3,0){\line(0,1){1}} \put(3,0){\line(0,-1){1}}

%%%%%%%%%%%%%%%%%%%%%%%%%%%%%%%%%%%%%%%%%%%%%%%%%%%%%%%%%%%%%
 \put(3,-1){\circle*{0.2}}\put(3,1){\circle*{0.2}}
 \put(4.9,-0.5){3}\put(3.9,-0.5){2}\put(6,-0.5){4}\put(3.1,0.6){1}
 \put(4,-1.5){ $T_1$}

\end{picture}\\[5mm]
Figure~2.
\end{center}

%%%%%%%%%%%%%%%%%%%%%%%%%%%%%%%%%%%%%%%%%%%%%%%%%%%%%%%%%%%%%%
\begin{prop}\label{prop+real}
Let the graphs $T$ and $T_1$ be in Figure $2$ with the root $v$. If
$I(G;x)$ has only real zeros, then we have the following.
\begin{itemize}
\item [\rm (i)]
$I(G\overline{\circ} T;x)$ has only real zeros for $v\in\{1,2,3\}$
and $I(G\overline{\circ} T;x)$ is log-concave for $v=4$;
\item [\rm (ii)]
$I(G\overline{\circ} T_1;x)$ is log-concave for $v\in\{1,2,3,4\}$.
\end{itemize}
\end{prop}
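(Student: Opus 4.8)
The plan is to compute $I(G\overline{\circ} T;x)$ and $I(G\overline{\circ} T_1;x)$ explicitly as polynomials in $I(G;\cdot)$ evaluated at suitable arguments, and then apply the fundamental recurrence $I(H;x)=I(H-v;x)+xI(H-N[v];x)$ at the root, together with Lemma~\ref{product}. First I would fix a single copy of the rooted tree $T$ (a path $2$--$3$, with a pendant vertex $1$ attached to $3$, and a pendant vertex $4$ attached to $2$; labels as in Figure~2) and compute, for each choice of root $v\in\{1,2,3,4\}$, the two polynomials $f_v(x):=I(T-v;x)$ and $g_v(x):=I(T-N[v];x)$; these are fixed low-degree polynomials (degree $\le 3$) that I can read off by hand. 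Since $G\overline{\circ} T$ is built by identifying the root of a copy of $T$ with each vertex $v_i$ of $G$, a standard deletion-at-$v_i$ argument gives
\[
I(G\overline{\circ} T;x)=\bigl(f_v(x)\bigr)^{n}\, I\!\left(G;\frac{x\,g_v(x)}{f_v(x)}\right),
\]
exactly as in the claw-free case treated by Zhu~\cite{Zhu13} and Levit--Mandrescu~\cite{LM07}; the point is that from the perspective of $G$, attaching a rooted copy of $T$ to each vertex multiplies the ``weight'' of an independent set in the obvious way, with $f_v$ the local generating function when $v_i$ is excluded and $x g_v$ when $v_i$ is included.

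Next I would analyze each case. For the real-rootedness claims in (i) with $v\in\{1,2,3\}$: by Lemma~\ref{product}(iii) it suffices to check that $f_v(x)$ has only real zeros and that the transformation $x\mapsto x g_v(x)/f_v(x)$ sends the (negative real) zeros of $I(G;x)$ back to real numbers and that the resulting product polynomial $(f_v(x))^n I(G; x g_v(x)/f_v(x))$ is a polynomial with real zeros. Concretely, $f_v$ will be a product of linear (or at most quadratic real-rooted) factors for these three roots, and the key algebraic fact to verify is that the equation $x g_v(x) + c\, f_v(x)=0$ has only real roots for every $c>0$ (here $-c$ ranges over zeros of $I(G;x)$, so $c>0$); since $x g_v(x)+c f_v(x)$ is a cubic with a free positive parameter $c$, this reduces to a discriminant/interlacing check, most cleanly done by verifying that $x g_v(x)$ and $f_v(x)$ themselves have interlacing real zeros (a ``compatible pair''), which forces every positive combination to be real-rooted. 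That interlacing verification for the three good choices of $v$, and the demonstration that it \emph{fails} for $v=4$ (so that one must settle for log-concavity there), is the computational heart of the argument.

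For the log-concavity statements — case (i) with $v=4$ and all of case (ii) — I would instead argue: $I(G;x)$ real-rooted $\Rightarrow$ log-concave (Newton), hence so is any $I(G;y)$-type substitution once I know the substituted polynomial expands with the right structure; more efficiently, I would write $I(G\overline{\circ} T;x)=(f_v(x))^n I(G; xg_v(x)/f_v(x))$, clear denominators to express it as a genuine polynomial identity
\[
I(G\overline{\circ} T;x)=\sum_{k=0}^{n} i_k(G)\, \bigl(x\,g_v(x)\bigr)^{k}\bigl(f_v(x)\bigr)^{n-k},
\]
and then show each summand is log-concave and — this is the real obstacle — that the \emph{sum} is log-concave. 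Lemma~\ref{product}(i) only handles products, not sums, so for the sum I would mimic the inductive technique used in the proof of Theorem~\ref{thm+com} and in Fact~\ref{ef}: peel off the top term, use that $I(G;x)$ is log-concave so consecutive $i_k(G)$ satisfy the needed ratio inequalities, and check a single boundary inequality $c_1^2\ge c_0 c_2$ at the end, where everything reduces to small explicit polynomials $f_v,g_v$. The main obstacle throughout is thus purely computational bookkeeping with the fixed polynomials $f_v,g_v$ for each of the four root choices (eight cases total), deciding in each case whether one gets the stronger real-rootedness conclusion or only log-concavity; I expect $v=4$ in $T$ to be exactly the case where the interlacing of $xg_4$ and $f_4$ breaks, and all four roots of $T_1$ to similarly yield only log-concavity because $T_1$ contains a longer path making the relevant cubic/quartic fail the real-rootedness test for some $c>0$.
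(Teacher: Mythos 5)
Your treatment of the real-zeros half is essentially the paper's argument: both start from the rooted-product formula and exploit the factorization $I(G;x)=\prod_{i=1}^{\alpha(G)}(1+a_ix)$ with $a_i>0$, which turns $\bigl(f_v(x)\bigr)^{n}I\bigl(G;xg_v(x)/f_v(x)\bigr)$ into $f_v(x)^{\,n-\alpha(G)}\prod_{i}\bigl[f_v(x)+a_ix\,g_v(x)\bigr]$, so that it suffices to show each small factor $f_v+a_ixg_v$ is real-rooted for every $a_i>0$ --- your ``$xg_v+cf_v$ real-rooted for all $c>0$'' criterion; the paper does this by a direct discriminant check (for $T$ with root $1$ the factor is the quadratic $1+(3+2a_i)x+2a_ix^2$), which is the same content as your interlacing check. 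One caution: you have misread Figure~2. The tree $T$ has five vertices, with vertex $3$ adjacent to $1$, to $2$ and to an unlabelled leaf, and with $4$ pendant at $2$; your description (the path $1$--$3$--$2$--$4$) is the claw-free $P_4$, for which the statement is already known, and using it would give the wrong $f_v,g_v$ (e.g.\ the paper's $I(T-1;x)=(1+x)(1+3x)$ and $I(T-4;x)=(1+x)^3+x$ come from the five-vertex tree).

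The genuine gap is in the log-concavity half. There you drop the factorization and propose to prove log-concavity of the sum $\sum_{k=0}^{n}i_k(G)\bigl(xg_v(x)\bigr)^{k}f_v(x)^{\,n-k}$ by ``mimicking'' the induction in Theorem~\ref{thm+com} and Fact~\ref{ef}. That induction works only because the peeled-off term is the \emph{constant} $a_0$ (respectively $d_0$), which perturbs a single coefficient $c_0$ and leaves one boundary inequality to verify; in your sum the peeled-off term is the full polynomial $i_0(G)f_v(x)^{n}$ (or $i_n(G)(xg_v)^{n}$), log-concavity is not preserved under addition of polynomials, and no single boundary check can close such an induction. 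Nor can you invoke Theorem~\ref{thm+com}(i) directly: its hypothesis $(a_i^2-a_{i-1}a_{i+1})b_1^2\ge a_ia_{i-1}b_2$ is not a consequence of log-concavity (or even real-rootedness) of $I(G;x)$, and in any case your sum is a homogenized composition, not of the form $f(g(x))$. The repair is simply to keep the factorization you already used for real-rootedness: write $I(G\overline{\circ}T;x)=f_v(x)^{\,n-\alpha(G)}\prod_{i=1}^{\alpha(G)}\bigl[f_v(x)+a_ix\,g_v(x)\bigr]$, check that $f_v$ is log-concave and that each factor is log-concave for every parameter value (for $T$ with root $4$ this is the cubic $(r+1)x^3+3(1+r)x^2+(3+r)x+1$, $r>0$, settled by the two inequalities $9(1+r)^2>(r+1)(3+r)$ and $(3+r)^2>3(1+r)$), and finish with Lemma~\ref{product}(i); this is the paper's proof and it handles $v=4$ for $T$ and all four roots of $T_1$ uniformly.
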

\begin{proof} Since the proofs are similar, for brevity we only prove (i) for the root being $1$ or
$4$. Recall the formula for independence polynomials of the rooted
product of graphs, see \cite{Gut92,R09} for instance: If $G$ is a
graph of order $n$ and $H$ is a graph with the root $v$, then
$$I(G \overline{\circ}
H;x)=I^n(H-v;x)I\left(G;\frac{xI(H-N[v];x)}{I(H-v;x)}\right).$$
Since $I(G;x$ has only real zeroes, we can assume that
$$I(G;x)=\prod_{i=1}^{\alpha(G)}\left(1+a_ix\right),$$
where $a_i>0$ for $1\leq i\leq \alpha(G)$. Thus
\begin{eqnarray}\label{eq}
I(G \overline{\circ}
T;x)&=&I^n(T-v;x)I\left(G;\frac{xI(T-N[v];x)}{I(T-v;x)}\right)\nonumber\\
&=&I^{n-\alpha(G)}(T-v;x)\prod_{i=1}^{\alpha(G)}\left[I(T-v;x)+a_ixI(T-N[v];x)\right],
\end{eqnarray}

If the root $v=1$, then $I(T-v;x)=(1+x)(1+3x)$ and
$I(T-N[v];x)=(1+x)(1+2x)$. Thus, by (\ref{eq}), we have
\begin{eqnarray}\label{eqq}
I(G \overline{\circ} T;x)
&=&(1+x)^n(1+3x)^n\prod_{i=1}^{\alpha(G)}\left(1+\frac{a_ix(1+2x)}{1+3x}\right)\nonumber\\
&=&(1+x)^n(1+3x)^{n-\alpha(G)}\prod_{i=1}^{\alpha(G)}\left[1+3x+a_ix(1+2x)\right]\nonumber\\
&=&(1+x)^n(1+3x)^{n-\alpha(G)}\prod_{i=1}^{\alpha(G)}\left[1+(3+2a_i)x+2a_ix^2\right].
\end{eqnarray}
It is also easy to confirm that $1+(3+2a_i)x+2a_ix^2$ has only real
zeros for $a_i> 0$. Hence $I(G \overline{\circ} T;x)$ has only real
zeros by (\ref{eqq}) and Lemma \ref{product} (iii).

If the root $v=4$, then $I(T-v;x)=(1+x)^3+x$ and
$I(T-N[v];x)=(1+x)^2+x$. Then \begin{eqnarray}\label{eqqq} I(G
\overline{\circ} T;x)
&=&[(1+x)^3+x]^n\prod_{i=1}^{\alpha(G)}\left(1+\frac{a_ix[(1+x)^2+x]}{(1+x)^3+x}\right)\nonumber\\
&=&[(1+x)^3+x]^{n-\alpha(G)}\prod_{i=1}^{\alpha(G)}\left[(a_i+1)x^3+3(1+a_i)x^2+(3+a_i)x+1\right].
\end{eqnarray}
So, it is easy to obtain the log-concavity of $(1+x)^3+x$ and we
claim that for any positive $r$,
$$(r+1)x^3+3(1+r)x^2+(3+r)x+1$$
is log-concave. Actually, it suffices to prove the inequalities
$$9(1+r)^2-(r+1)(3+r)=(r+1)(8r+6)>0$$
and $$(3+r)^2-3(1+r)=r^2+3r+6>0.$$ Thus it follows from (\ref{eqqq})
and Lemma \ref{product} (i) that $I(G\overline{\circ} T;x)$ is
log-concave. This completes the proof.
\end{proof}

\begin{rem}
If we take a tree $G$ with independence polynomial having only real
zeros, then we can repeatedly use Propositions \ref{prop+real} to
generate infinite trees with unimodal independence polynomials. In
addition, all of our constructions further support Conjecture
\ref{conj+tree}.
\end{rem}
\begin{rem}
Let $(I(H-v;x),I(H-N[v];x))=f(x)(g(x),h(x))$, where $(g(x),h(x))=1$.
Assume that $f(x),g(x),h(x)$ have only real zeros. From the proof,
we can see that if $g(x)+rxh(x)$ has only real zeros for any
positive $r$, then we can obtain that $I(G\overline{\circ} H;x)$ has
only real zeros by Lemma \ref{product} (iii). Generally speaking,
two useful approaches are to guarantee that the zeros of $g(x)$ and
$h(x)$ interlace or the polynomials $g(x)$ and $h(x)$ are
compatible, see Liu and Wang \cite{LW07} and Chudnovsky and
Seymour~\cite{CS07}, respectively. On the other hand, our results
can be generalized to another operation of graphs called the clique
cover product, see Zhu \cite{Zhu13}.
\end{rem}

%%%%%%%%%%%%%%%%%%%%%%%%%%%%%%%%%%%%%%%%%%%%%%%%%%%%%%%%%%%%%%%%%%%%%%%%%%%%%%%%%%%%%
\section{An Affirmative Answer to Problem~\ref{pro}}
In this section, we answer the Problem~\ref{pro} by finding a
bipartite graph. Define $H_n$ and $G_n$ be the graphs in Figure~1,
where $H_0=\emptyset$, $H_1=K_2$, $G_0=K_1$ and $G_1=K_{1,2}$.
%%%%%%%%%%%%%%%%%%%%%%%%%%%%%%%%%%%%%%%
%%%%% Fig 1
%%%%%%%%%%%%%%%%%%%%%%%%%%%%%%%%%%%%%%%%
\begin{center}
\setlength{\unitlength}{1cm}
\begin{picture}(22,2.5)(-0.5,-2.0)
%%%%% horizontal line

\put(3.5,-2.0){$H_n$}
%%%%%%%%%%%%%%%%%%%%%%%%%%%%%%%%%%%%%%%%%%%%%%%%%%%%%%%%%%%%%%%%%%%%%%%%%%%%%%%%%
%%%%% horizontal line
\thicklines \put(1,0){\line(1,0){2.5}}
\thicklines\put(1,1){\line(1,0){2.5}}
 \thicklines \put(1,0){\line(0,1){1}}\thicklines \put(2,0){\line(0,1){1}}\put(3,0){\line(0,1){1}}
 \thicklines\put(6,1){\line(-1,0){1.3}}\thicklines\put(6,1){\line(0,-1){1}}\thicklines\put(6,0){\line(-1,0){1.3}}\thicklines\put(5,1){\line(0,-1){1}}
%%%%% horizontal black vetex

%%%%%%%%%%%%%%%% horizontal v_i vetex
 \put(0.9,-0.5){$1$}\put(1.9,-0.5){$2$}\put(2.9,-0.5){$3$}\put(4.5,-0.5){$n-1$}\put(5.9,-0.5){$n$}
%%%%%%%%%%%%%%%% diagonal  line 1 , black vetex
\put(1,0){\circle*{0.2}}\put(2,0){\circle*{0.2}}\put(3,0){\circle*{0.2}}\put(3.5,0){\circle*{0.1}}\put(3.7,0){\circle*{0.1}}\put(3.9,0){\circle*{0.1}}
\put(4.1,0){\circle*{0.1}}\put(4.1,1){\circle*{0.1}}
\put(5,0){\circle*{0.2}}\put(6,0){\circle*{0.2}}\put(3.5,1){\circle*{0.1}}\put(3.7,1){\circle*{0.1}}\put(3.9,1){\circle*{0.1}}

\put(1,1){\circle*{0.2}}\put(2,1){\circle*{0.2}}\put(3,1){\circle*{0.2}}
\put(6,1){\circle*{0.2}}\put(5,1){\circle*{0.2}}\put(14,0.7){$u$}
%%%%%%%%%%%%%%%%55
\thicklines \put(8,0){\line(1,0){2.5}}
\thicklines\put(8,1){\line(1,0){2.5}}
 \thicklines \put(8,0){\line(0,1){1}}\thicklines \put(9,0){\line(0,1){1}}\put(10,0){\line(0,1){1}}
 \thicklines\put(14,1){\line(-1,0){2.3}}\thicklines\put(13,1){\line(0,-1){1}}\thicklines\put(13,0){\line(-1,0){1.3}}\thicklines\put(12,1){\line(0,-1){1}}

%%%%%%%%%%%%%%%% horizontal v_i vetex
 \put(7.9,-0.5){$1$}\put(8.9,-0.5){$2$}\put(9.9,-0.5){$3$}\put(11.5,-0.5){$n-1$}\put(12.9,-0.5){$n$}
%%%%%%%%%%%%%%%% diagonal  line 1 , black vetex
\put(8,0){\circle*{0.2}}\put(9,0){\circle*{0.2}}\put(10,0){\circle*{0.2}}\put(10.5,0){\circle*{0.1}}\put(10.7,0){\circle*{0.1}}\put(10.9,0){\circle*{0.1}}
\put(12,0){\circle*{0.2}}\put(13,0){\circle*{0.2}}\put(10.5,1){\circle*{0.1}}\put(10.7,1){\circle*{0.1}}\put(10.9,1){\circle*{0.1}}
\put(11.1,0){\circle*{0.1}}\put(11.1,1){\circle*{0.1}}
\put(8,1){\circle*{0.2}}\put(9,1){\circle*{0.2}}\put(10,1){\circle*{0.2}}
\put(13,1){\circle*{0.2}}\put(12,1){\circle*{0.2}}\put(14,1){\circle*{0.2}}
%%%%%%%%%%%%%%%%55
 \put(11.5,-2.0){$G_n$}
%%%%%%%%%%%%%%%%%%%%%%%%%%%%%%%%%%%%%%%%%%%%%%%%%
\end{picture}
Figure~3
\end{center}
%%%%%%%%%%%%%%%%%%%%%%%%%%%%%%%%%%%%%%
The following result is a special case of Corollary 2.4 in Liu and
Wang~\cite{LW07}.
%%%%%%%%%%%%%%%%%%%%%%%%%%%%%%%%%%%%%%%%
%%%%% Liu-Wang's result
%%%%%%%%%%%%%%%%%%%%%%%%%%%%%%%%%%%%%%%%
\begin{lem}\label{lem-LW}
Let $\{Q_n(x)\}_{n\ge 0}$ be a sequence of polynomials with
nonnegative coefficients such that
\begin{itemize}
\item [\rm (i)]
$Q_{n}(x)=a_n(x)Q_{n-1}(x)+c_n(x)Q_{n-2}(x)$ for $n\ge 2$.
\item [\rm (ii)]
$Q_0(x)$ is a constant and $\deg Q_{n-1}\le\deg Q_n\le\deg
Q_{n-1}+1$.
\end{itemize}
If $c_n(x)\le 0$ whenever $x\le 0$, then $\{Q_n(x)\}$ has only real
zeros. Furthermore, the zeros of $Q_n(x)$ are separated by the zeros
of $Q_{n-1}(x)$.
\end{lem}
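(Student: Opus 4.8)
The plan is to prove the lemma by induction on $n$, carrying along a strengthened inductive hypothesis: for every $n$ the polynomials $Q_{n-1}(x)$ and $Q_n(x)$ are real-rooted, each has a \emph{positive} leading coefficient, and their zeros interlace in the strong sense that each open interval between two consecutive zeros of $Q_n$ contains a zero of $Q_{n-1}$ (and, symmetrically, each gap of $Q_{n-1}$ contains a zero of $Q_n$). Since the degree hypothesis forces $\deg Q_0\le \deg Q_1\le\cdots$ with jumps of at most $1$, the number of zeros is tightly controlled, and the whole argument reduces to a sign chase.

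Before the induction I would record two elementary observations. First, a nonzero polynomial with nonnegative coefficients has no positive real zero, so every real zero of each $Q_n$ lies in $(-\infty,0]$. Second, if $r\le 0$ is a zero of $Q_{n-1}$, then the recurrence gives $Q_n(r)=c_n(r)Q_{n-2}(r)$, and by hypothesis $c_n(r)\le 0$. Thus the sign of $Q_n$ at a zero of $Q_{n-1}$ is dictated (up to the sign of $c_n$) by the sign of $Q_{n-2}$ there. The base case is immediate: $Q_0$ is a nonzero constant with no zeros, and $Q_1$ has degree at most $1$ with nonnegative coefficients, hence is real-rooted with its (at most one) zero in $(-\infty,0]$, and the interlacing condition against the empty zero set of $Q_0$ holds vacuously.

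For the inductive step, assume the statement through index $n-1$ and list the zeros of $Q_{n-1}$ as $r_1>r_2>\cdots>r_k$ (all $\le 0$), where $k=\deg Q_{n-1}$. By the inductive interlacing hypothesis the zeros of $Q_{n-2}$ separate the $r_j$, so $Q_{n-2}$ does not vanish at any $r_j$ and the values $Q_{n-2}(r_1),\ldots,Q_{n-2}(r_k)$ strictly alternate in sign. Combining with $c_n(r_j)\le 0$, the values $Q_n(r_1),\ldots,Q_n(r_k)$ also strictly alternate, so by the intermediate value theorem $Q_n$ has a zero in each of the $k-1$ intervals $(r_{j+1},r_j)$. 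Comparing signs as $x\to-\infty$ (using positivity of the leading coefficients of $Q_{n-1}$ and $Q_{n-2}$ together with the eventual sign of $c_n$), and, when $\deg Q_n=k+1$, also as $x\to+\infty$, produces the remaining one or two zeros of $Q_n$ outside $[r_k,r_1]$; since $\deg Q_n\in\{k,k+1\}$ this accounts for all its zeros, so $Q_n$ is real-rooted and its zeros interlace those of $Q_{n-1}$, closing the induction and giving both conclusions of the lemma.

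The main obstacle will be the degenerate cases that the clean sign chase hides: $c_n$ might vanish at some $r_j$ (so $Q_n(r_j)=0$, and one must check the zero there is simple and re-establish strict interlacing), the degree might stay equal rather than increase so the "extra" zero at $+\infty$ is absent, and one must make sure no zero of $Q_n$ is lost at the ends of the range. The cleanest way around this is a continuity/perturbation argument: first prove strict interlacing when $c_n$ is replaced by a generic nearby polynomial still negative on $(-\infty,0)$, so that every alternation above is strict and every zero counted is simple, and then let the perturbation tend to $0$, using that real-rootedness and (non-strict) interlacing are closed conditions. This bookkeeping is exactly what Liu and Wang package in their Corollary~2.4 \cite{LW07}; in a self-contained write-up I would either reproduce it or, as done here, simply invoke it.
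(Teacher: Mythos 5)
Your proposal is correct and takes essentially the same route as the paper: the paper gives no proof of this lemma at all, stating it only as a special case of Corollary~2.4 of Liu and Wang \cite{LW07}, which is exactly the citation you ultimately invoke. Your accompanying sketch of the inductive sign-alternation/interlacing argument (with the degenerate cases handled by perturbation) is a faithful outline of what that cited result packages, so there is nothing to fault beyond bookkeeping you already acknowledge.
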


 The next result gives an answer to Problem~\ref{pro}.
\begin{thm}
Let $G_n$ be the graph in Figure~3. Then $I(G_n;x)$ is symmetric and
has only real zeros.
\end{thm}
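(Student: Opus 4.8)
The plan is to derive a three-term recurrence for $I(G_n;x)$ and then read off symmetry and real-rootedness from it. Write $g_n=I(G_n;x)$ and $h_n=I(H_n;x)$. Apply the fundamental identity $I(G;x)=I(G-v;x)+xI(G-N[v];x)$ at the pendant vertex $u$ of $G_n$: since $G_n-u=H_n$ and $G_n-N[u]$ is $H_n$ with its top right corner deleted, we get $g_n=h_n+x\,I(H_n-w;x)$, where $w$ denotes that top right corner (the unique neighbour of $u$). The crux is that each deletion produces, up to isomorphism, an earlier member of the family: deleting $w$ from the ladder $H_n$ leaves a copy of $H_{n-1}$ with a pendant at a corner, which by the top–bottom reflection of the ladder is $\cong G_{n-1}$; and applying the identity to $h_n$ at the bottom right corner of $H_n$ identifies $H_n$ minus that vertex with $G_{n-1}$ and $H_n$ minus its closed neighbourhood with $G_{n-2}$. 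Substituting gives
\begin{equation*}
g_n=(1+x)\,g_{n-1}+x\,g_{n-2},\qquad n\ge 2,
\end{equation*}
with $g_0=I(K_1;x)=1+x$ and $g_1=I(K_{1,2};x)=1+3x+x^2$; one also checks the relation persists at $n=1$ under the formal convention $g_{-1}:=1$.

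Next I would settle symmetry by induction on $n$ from this recurrence. Because $a_n(x)=1+x$ and $c_n(x)=x$ have nonnegative coefficients and $\deg g_0=1$, we get $\deg g_n=\deg g_{n-1}+1=n+1$, so $\alpha(G_n)=n+1$. If $g_{n-1}(x)=x^{n}g_{n-1}(1/x)$ and $g_{n-2}(x)=x^{n-1}g_{n-2}(1/x)$, then a direct substitution into the recurrence yields $x^{n+1}g_n(1/x)=(1+x)g_{n-1}(x)+x\,g_{n-2}(x)=g_n(x)$, so $g_n$ is symmetric; the base cases $g_0,g_1$ (equivalently $g_{-1},g_0$) are palindromic. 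For real-rootedness I would invoke Lemma~\ref{lem-LW} with the shifted sequence $Q_0=1$, $Q_1=1+x$ and $Q_n=(1+x)Q_{n-1}+x\,Q_{n-2}$ for $n\ge2$, so that $Q_n=g_{n-1}$ for $n\ge1$: here $Q_0$ is constant, $\deg Q_n=\deg Q_{n-1}+1$, all coefficients are nonnegative, and $c_n(x)=x\le0$ whenever $x\le0$, so the lemma gives that every $Q_n$, hence every $g_n=I(G_n;x)$, has only real zeros (with interlacing as a bonus). Finally, for an odd $\alpha>3$ the graph $G_{\alpha-1}$ is connected and bipartite, contains a $4$-cycle and so is not a tree, satisfies $\alpha(G_{\alpha-1})=\alpha$, and has $I(G_{\alpha-1};x)$ symmetric with only real zeros, which answers Problem~\ref{pro}.

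The main obstacle is the first step: after each application of the deletion identity one must verify that the graph produced really is a ladder-with-a-corner-pendant isomorphic to some $G_k$, which uses the reflection automorphism of $H_n$ and requires checking the small cases $n=2$ (where $H_2=C_4$ and $G_0=K_1$) so the "corner" descriptions remain meaningful. The remaining subtlety is purely bookkeeping: $g_0=1+x$ is not a constant, so Lemma~\ref{lem-LW} is applied to $Q_n=g_{n-1}$ with the formal value $g_{-1}=1$; once the recurrence is in place, the symmetry and real-rootedness arguments are routine inductions.
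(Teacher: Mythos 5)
Your proposal is correct and follows essentially the same route as the paper: the same recurrence $I(G_n;x)=(1+x)I(G_{n-1};x)+xI(G_{n-2};x)$ obtained by deleting the pendant vertex $u$ and then a corner of $H_n$, real-rootedness via Lemma~\ref{lem-LW} (the paper likewise handles the non-constant $I(G_0;x)$ by setting $I(G_{-1};x)=1$, matching your index shift), and the same palindromicity induction. Your extra care with the isomorphisms $G_n-N[u]\cong G_{n-1}$ and the degree bookkeeping only makes explicit what the paper leaves implicit.
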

\begin{proof}
Let $H_n$ be the graph in Figure~3. Then
\begin{eqnarray}\label{eeq}
I(G_n;x)&=&I(G_n-u;x)+xI(G_{n}-N[u];x)\nonumber\\
&=&I(H_n;x)+xI(G_{n-1};x)\nonumber\\
&=&I(G_{n-1};x)+xI(G_{n-2};x)+xI(G_{n-1};x)\nonumber\\
&=&(x+1)I(G_{n-1};x)+xI(G_{n-2};x)
\end{eqnarray}
for $n\geq2$. Note that $I(G_0;x)=1+x$ and $I(G_1;x)=1+3x+x^2$. In
fact, we can set $I(G_{-1};x)=1$, which is well-defined extension by
(\ref{eeq}). Thus, by Lemma~\ref{lem-LW}, $I(G_n;x)$ has only real
zeros. It is not hard to find that the degree of $I(G_n;x)$ is
$n+1$, i.e., $\alpha(G_n)=n+1$.

In the following, we will show that $I(G_n;x)$ is symmetric by
induction $n$. It is obvious for $n=0,1$. Assume that $I(G_k;x)$ is
symmetric for $k\leq n-1$.

To prove the symmetry of $I(G_n;x)$, it suffices to show
$x^{n+1}I(G_n;1/x)=I(G_n;x).$ By (\ref{eq}) and the induction
hypothesis, it follows that
\begin{eqnarray*}
x^{n+1}I(G_n;1/x)&=&x^{n+1}\left[(1/x+1)I(G_{n-1};1/x)+(1/x)I(G_{n-2};1/x)\right]\\
&=&(x+1)I(G_{n-1};x)+xI(G_{n-2};x)\\
&=&I(G_n;x).
\end{eqnarray*}
Thus $I(G_n;x)$ is symmetric. This completes the proof.
\end{proof}
\begin{rem}
Using the method in \cite{WZ10} to solve the linear recurrence
relation (\ref{eeq}), we can also obtain that
\begin{eqnarray}\label{ff}
I(G_n;x)&=&\frac{{\la_1}^{n+2}-{\la_2}^{n+2}}{\la_1-\la_2}\nonumber\\
&=&(1+x)^{\delta_n}\prod_{s=1}^{\lceil{n/2}\rceil}\left[(1+x)^2+4x\cos^2\frac{s\pi}{n+2}\right]\nonumber\\
&=&(1+x)^{\delta_n}\prod_{s=1}^{\lceil{n/2}\rceil}\left[x^2+2x\cos\frac{2s\pi}{n+2}+1\right],
\end{eqnarray}
where $\delta_n=1$ for even $n$ and $0$ otherwise, $\la_1$ and
$\la_2$ are the roots of quadric equation $\la^2-(x+1)\la-x=0$.
Noting that reality of zeros and symmetry of polynomials is closed
under the product of polynomials, respectively, it clearly follows
from (\ref{ff}) that $I(G_n;x)$ is symmetric and has only real
zeros.
\end{rem}

%%%%%%%%%%%%%%%%%%%%%%%%%%%%%%%%%%%%%%%%
%%%%%%%%%%%%%%% References
%%%%%%%%%%%%%%%%%%%%%%%%%%%%%%%%%%%%%%%%

\end{document}